\definecolor{myred}{rgb}{0.75,0,0}
\definecolor{mygreen}{rgb}{0,0.5,0}
\definecolor{myblue}{rgb}{0,0,0.65}
\let\temp\emptyset
\let\emptyset\varnothing
\let\varnothing\temp 
\theoremstyle{plain}
\newtheorem{theorem}{Theorem}
\newtheorem{proposition}[theorem]{Proposition}
\newtheorem*{lemma*}{Lemma}
\newtheorem*{proposition*}{Proposition}
\theoremstyle{definition}
\theoremstyle{remark}
\newcommand{\bb}[1]{\expandafter\newcommand\expandafter{\csname #1\endcsname}{{\mathbb {#1}}}} 
\newcommand{\ZZ}{\mathbb Z}
\newcommand{\NN}{\mathbb N}
\renewcommand{\a}{\alpha}
\newcommand{\mrm}[1]{\expandafter\newcommand\expandafter{\csname #1\endcsname}{{\mathrm {#1}}}}
\renewcommand{\mod}{\bmod}
\global\long\def\NN{\mathbb{N}}
\global\long\def\ZZ{\mathbb{Z}}
\newtheorem{case}{Case}
\title{Consecutive runs of sums of two squares}
\author{Noam Kimmel}
\author{Vivian Kuperberg}
\thanks{ This research was supported by the European Research Council (ERC) under the European Union's  Horizon 2020 research and innovation program  (Grant agreement No. 786758). The second author was supported by the NSF Mathematical Sciences Research Program through the grant DMS-2202128.}
\keywords{Sums of two squares, Sieve methods, Arithmetic progressions}
\begin{document}

\begin{abstract}
    We study the distribution of consecutive sums of two squares in arithmetic progressions. If $\{E_n\}_{n \in \N}$ is the sequence of sums of two squares in increasing order, we show that for any modulus $q$ and any congruence classes $a_1,a_2,a_3 \mod q$ which are admissible in the sense that there are solutions to $x^2 + y^2 \equiv a_i \mod q$, there exist infinitely many $n$ with $E_{n+i-1} \equiv a_i \mod q$, for $i = 1,2,3$. We also show that for any $r_1, r_2 \ge 1$, there exist infinitely many $n$ with $E_{n+i-1} \equiv a_1 \mod q$ for $1 \le i \le r_1$ and $E_{n+ i - 1} \equiv a_2 \mod q$ for $r_1 + 1 \le i \le r_1 + r_2$.
\end{abstract}
\maketitle

\section{Introduction}

Dirichlet's theorem on primes in arithmetic progressions states that for any fixed modulus $q$ and any congruence class $a$ mod $q$ with $(a,q) = 1$, there are infinitely many primes congruent to $a$ mod $q$. The prime number theorem for arithmetic progressions goes further, stating that the sequence of primes is equidistributed among the reduced residue classes of a modulus $q$; i.e., if $\pi(x;q,a)$ denotes the number of primes $p \le x$ which are congruent to $a$ mod $q$ and $(a,q) = 1$, then
\[\pi(x;q,a) = \frac{\mathrm{li}(x)}{\phi(q)}(1 + o(1)),\]
where $\mathrm{li}(x) := \int_2^x \frac{\mathrm dy}{\ln y}$ is the logarithmic integral. 

One can ask the same question about strings of consecutive primes, where much less is known. Let $p_n$ denote the sequence of primes in ascending order. For a fixed modulus $q$ and an integer $r \ge 1$, for any $r$-tuple $\mathbf a = [a_1, \dots, a_r]$ of reduced residue classes mod $q$, let $\pi(x;q,\mathbf a)$ denote the number of strings of consecutive primes matching the residue classes of $\mathbf a$. Precisely, define
\begin{equation}
    \pi(x;q,\mathbf a) := \#\{p_n \le x: p_{n+i-1} \equiv a_i \pmod q \qquad \forall 1 \le i \le r\}.
\end{equation}
Any randomness-based model of the primes would suggest that sequences of consecutive primes equidistribute among the possibilities for $\mathbf a$, i.e. that $\pi(x;q,\mathbf a) \sim \frac{\pi(x)}{\phi(q)^r}$ as $x \to \infty$. Lemke Oliver and Soundararajan \cite{MR3624386-lemke-oliver-sound} provide a heuristic argument based on the Hardy--Littlewood $k$-tuples conjectures for estimating $\pi(x;q,\mathbf a)$, conjecturing that the asymptotic density is in fact $\frac 1{\phi(q)^r}$, but that there are large second-order terms creating biases among the patterns.

However, little is known about $\pi(x;q,\mathbf a)$ whenever $r \ge 2$; in most cases, it is not even known that $\pi(x;q,\mathbf a)$ tends to infinity with $x$---i.e., that there are infinitely many strings of consecutive primes $p_n, \dots, p_{n+r-1}$ with $p_{n+i-1} \equiv a_i \mod q$. If $\phi(q) = 2$, it is an immediate consequence of Dirichlet's theorem on primes in arithmetic progressions that for reduced residues $a \ne b \mod q$, $\pi(x;q,[a,b])$ and $\pi(x;q,[b,a])$ tend to infinity; Knapowski and Tur\'an \cite{MR0466043-knapowski-turan} observed that when $\phi(q) = 2$, all four possible patterns of length $2$ occur infinitely often. Shiu \cite{MR1760689-Shiu} showed more generally that when every element of $\mathbf a$ is the same residue class mod $q$, so that $\mathbf a = [a,\dots,a]$ for some reduced $a$ mod $q$, then $\pi(x;q,[a,\dots,a])$ tends to infinity for all $r$ and for all reduced $a$ mod $q$. This result was reproven by Banks, Freiberg, and Turnage-Butterbaugh \cite{MR3316460-banks-freiberg-turnage-butterbaugh} using developments in sieve theory. Maynard \cite{MR3530450-Maynard-dense-clusters} used sieve methods to show further that $\pi(x;q,[a \dots, a]) \gg \pi(x)$, so that a positive density of primes begins strings of $r$ consecutive primes all of which are congruent to $a$ mod $q$. 

For all other cases---that is, for any $r \ge 2$ with $\phi(q) > 2$ or $r \ge 3$ with $\phi(q) = 2$ and any $\mathbf a$ which is non-constant---it is not known that $\pi(x;q,\mathbf a)$ tends to infinity with $x$.

In this paper, we consider the analogous question for sums of two squares. Let $\mathbf E$ denote the set of sums of two squares and let $\mathbb {1}_{\mathbf E}$ denote its indicator function. Let $E_n$ denote the increasing sequence of sums of two squares, so that 
\[\mathbf E = \{a^2 + b^2:a,b \in \mathbb Z\} = \{E_n:n \in \N\}. \]
Let $N(x)$ denote the number of sums of two squares less than $x$. A number $n$ is expressible as a sum of two squares if and only if every prime congruent to $3$ mod $4$ divides $n$ to an even power; that is, if $n$ factors as $n = \prod_p p^{e_p}$ then $e_p$ is even whenever $p \equiv 3 \mod 4$. For a modulus $q = \prod_p p^{e_p}$ and a congruence class $a$ mod $q$, write $(a,q) = \prod_p p^{f_p}$, where $f_p \le e_p$ for all $p$. There are infinitely many sums of two squares congruent to $a$ mod $q$ if and only if the following two conditions hold:
\begin{itemize}
    \item for any prime $p \equiv 3 \mod 4$, $f_p$ is either even or $f_p = e_p$, and 
    \item if $e_2 - f_2 \ge 2$, then $\frac{a}{2^{f_2}} \not\equiv 3 \mod 4$.
\end{itemize}
We will call a congruence class $a$ mod $q$ \emph{admissible} if it satisfies these conditions, i.e. if there exists a solution to $x^2 + y^2 \equiv a \bmod q$.

For a modulus $q$, an integer $r \ge 1$ and an $r$-tuple $\mathbf a = [a_1, \dots, a_r]$ of admissible residue classes mod $q$, let 
\begin{equation}
    N(x;q,\mathbf a) := \#\{E_n \le x: E_{n+i-1} \equiv a_i \mod q \quad \forall 1 \le i \le r\}.
\end{equation}
Just as in the prime case, one expects $N(x;q,\mathbf a)$ to tend to infinity for any tuple of admissible residue classes, and in fact one expects $N(x;q,\mathbf a) \gg N(x)$ where $N(x)$ is the number of sums of two squares less than $x$; that is, one expects $N(x;q,\mathbf a)$ to represent a positive proportion of sums of two squares. In the case when $q \equiv 1 \mod 4$ is a prime, David, Devin, Nam, and Schlitt \cite{MR4498471-david-devin-nam-schlitt} develop heuristics for second-order terms in the asymptotics of $N(x;q,\mathbf a)$ analogously to \cite{MR3624386-lemke-oliver-sound}. Their heuristics are based on the analog of the Hardy--Littlewood $k$-tuples conjecture in the setting of sums of two squares, which was developed in \cite{MR3733767-freiberg-kurlberg-rosenzweig}.

Unlike in the case for primes, the infinitude of $N(x;q,\mathbf a)$ has not been proven when $r \ge 2$. We prove for several specific cases that $N(x;q,\mathbf a)$ tends to infinity with $x$.

\begin{theorem}\label{thm:infinitude-aaabbb}
Let $q \ge 1$ be an odd modulus and let $a$ and $b$ be reduced residue classes modulo $q$. Let $r \ge 1$, and let $\mathbf a = [a_1, \dots, a_r]$ be a tuple of residue classes such that for some $1 \le j \le r$, $a_i = a$ whenever $i \le j$ and $a_i = b$ whenever $i > j$. Then $N(x;q,\mathbf a)$ tends to infinity with $x$.
\end{theorem}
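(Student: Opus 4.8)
The plan is to manufacture each required run of consecutive sums of two squares inside an interval of bounded length, by forcing sums of two squares at a prescribed set of positions and forcing every other position in the interval to fail to be a sum of two squares. Write $\mathbf a = [a_1,\dots,a_r]$, so $a_i = a$ for $i\le j$ and $a_i = b$ for $i>j$. First I would fix the shape of the interval: choose shifts $0 = h_1 < h_2 < \dots < h_r$ with $h_i \equiv a_i - a \pmod q$ (so the $h_i$ with $i\le j$ are multiples of $q$, and those with $i>j$ are $\equiv b-a$), put $L = h_r$, and for each integer $\ell$ with $0<\ell<h_r$ and $\ell\notin\{h_1,\dots,h_r\}$ choose a distinct prime $p_\ell\equiv 3\pmod 4$ with $p_\ell\nmid q$ and $p_\ell > L$. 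By the Chinese Remainder Theorem the congruences $n\equiv a\pmod q$ and $n\equiv p_\ell-\ell\pmod{p_\ell^2}$ have a common solution, defining a progression $n\equiv n_*\pmod Q$ with $Q = q\prod_\ell p_\ell^2$. For every $n$ in this progression one has $n+h_i\equiv a_i\pmod q$, and $p_\ell \,\|\, (n+\ell)$ for each forbidden $\ell$, so $n+\ell$ is not a sum of two squares; since $p_\ell > L$ we have $p_\ell\nmid (n+h_i)$, so these primes do not interfere with the $h_i$. (A finite number of small primes $p\equiv 3\pmod 4$ may also need to be absorbed into $Q$, with $n$ chosen suitably modulo $p$, to make the tuple $(n+h_i)$ admissible; the hypothesis that $q$ is odd means no prime above $2$ enters this bookkeeping.)

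Granting that there are infinitely many $n\equiv n_*\pmod Q$ for which $n+h_1,\dots,n+h_r$ are all sums of two squares, the theorem follows: for such $n$ the sums of two squares in $[n,n+L]$ are precisely $n+h_1 < \dots < n+h_r$, so these are consecutive terms of $\{E_m\}$ with residues $a,\dots,a,b,\dots,b$, hence contribute to $N(x;q,\mathbf a)$, and letting $n\to\infty$ gives $N(x;q,\mathbf a)\to\infty$. The whole difficulty is thus concentrated in this simultaneous statement, and I would approach it by forcing the values $n+h_i$ to be sums of two squares \emph{by construction} rather than by a counting/sieve argument: using Brahmagupta-type identities I would take $n = n(m) = |z(m)|^2$ for a low-degree polynomial $z(m)\in\mathbb{Z}[i][m]$, and exploit that $n(m)+c = |z(m)+\delta|^2$ for a family of constants $\delta\in\mathbb{Z}[i]$, with corresponding shifts $c = 2\,\mathrm{Re}(\overline{z(0)}\,\delta)+|\delta|^2$. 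Choosing $z$ and the $\delta$'s so that the shifts realized this way include our $h_i$ makes each $n(m)+h_i$ automatically a sum of two squares for all $m$, and restricting $m$ to a residue class mod $q$ pins down the residues $n(m)+h_i \bmod q$; one then intersects this parametric family with the progression $n\equiv n_*\pmod Q$, which is a compatible congruence on $m$.

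The step I expect to be the real obstacle is making the residues come out as the block pattern $[a^j,b^{r-j}]$, especially when the blocks are long. A single identity of this kind supplies sums of two squares only at a rigid set of shifts — essentially a quadratic progression $c_t = u_0 t^2 + v_0 t$ — and one is forced to use \emph{consecutive} terms of this progression, since the $c_t$ lying in between are themselves forced sums of two squares and cannot be blocked away; but the residues mod $q$ of consecutive terms of such a progression drift, so a long constant run of residues is not available from one identity. Overcoming this requires iterating and combining identities, or passing to multi-parameter versions of the Gaussian-integer construction, or bringing in an auxiliary input on simultaneous representability; in all cases one must additionally certify that no unintended sum of two squares is forced inside the window and that the local conditions at the primes $\equiv 3\pmod 4$ (both the auxiliary $p_\ell$ and the small ones) are admissible. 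Oddness of $q$ is used throughout, to invert $2$ modulo $q$ when solving the linear congruences that place the $h_i$ and to keep admissibility a condition only at odd primes.
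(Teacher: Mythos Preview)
Your plan reduces the theorem to the statement that, for a fixed $r$-tuple $h_1<\dots<h_r$ and a fixed progression $n\equiv n_*\pmod Q$, there are infinitely many $n$ with all of $n+h_1,\dots,n+h_r$ sums of two squares. That reduction is fine, and the blocking argument with primes $p_\ell\equiv 3\pmod 4$ is exactly right. The gap is in the ``granting'' step: for $r\ge 4$ that simultaneous statement is not known, and your Brahmagupta/Gaussian-integer scheme does not establish it. As you yourself observe, a single identity $n(m)=|z(m)|^2$ forces sums of two squares at a rigid set of shifts $\{c_t\}$ (a quadratic progression in $t$), and you cannot block the unwanted $c_t$'s inside the window because they are themselves forced; meanwhile the residues of consecutive $c_t$'s mod $q$ drift, so long constant blocks are unavailable. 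Your last paragraph is an honest acknowledgment that you do not have a way past this, and indeed none is currently known along these lines: for $r=3$ one uses Hooley's method (this is the paper's \Cref{prop-hk}), and for $r\ge 4$ hitting a prescribed $r$-tuple in a progression is open.

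The paper avoids this entirely by \emph{not} trying to hit $r$ prescribed shifts. Instead it invokes McGrath's sieve result: for $k$ large enough one can take an admissible $k$-tuple $\mathcal H$, partition it into $M=r$ ordered bins $B_1,\dots,B_r$, and get infinitely many $n$ with at least one sum of two squares in \emph{each} bin (possibly more). One chooses all $h\in B_i$ with $h\equiv a\pmod q$ for $i\le j$ and $h\equiv b\pmod q$ for $i>j$, then blocks every position outside $\mathcal H$ as you do. The resulting consecutive sums of two squares in the window form a string of length $\ge r$, each congruent to either $a$ or $b$, with a single transition; hence it contains a length-$r$ substring matching $[a^j,b^{r-j}]$. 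The point is that the sieve gives ``$\ge 1$ hit per bin'' rather than ``hits at $r$ named positions,'' and the two-value block pattern is exactly what tolerates the uncontrolled multiplicity in each bin. Your approach, by contrast, asks for the stronger named-positions statement and therefore runs into the genuine open problem.
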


\Cref{thm:infinitude-aaabbb} relies on stronger sieving results than those that are available in the prime case. In particular, McGrath \cite{MR4498475-McGrath} showed that for any $m$, for large enough $k$, any \emph{admissible} $k$-tuple $\{h_1, \dots, h_k\}$ can be divided into $m$ sub-tuples or ``bins,'' such that for infinitely many $n$, for some $h_i$ in each bin, $n+h_i$ is a sum of two squares. Here and throughout this paper, a $k$-tuple $\mathcal H = \{h_1, \dots, h_k\}$ is \emph{admissible} if for all primes $p$, $\#\{\mathcal H \pmod p\} < p$. Banks, Freiberg, and Maynard \cite{MR3556490-Banks-Freiberg-Maynard} used a similar result in the case of primes to show that a positive proportion of real numbers are limit points of the sequence of normalized prime gaps, work which was refined in \cite{MR3855375-Pintz} and \cite{MR4143728-merikoski}.

\Cref{thm:infinitude-aaabbb} immediately implies that $N(x;q,[a,\dots, a])$ tends to infinity whenever $a$ is a reduced residue class modulo $q$, as well as implying that for any two reduced residue classes $a,b$ modulo $q$, $N(x;q,[a,b])$ tends to infinity. Note that if $a$ and $b$ are reduced residue classes modulo $q$, then they are also admissible; however, \Cref{thm:infinitude-aaabbb} does not apply to all admissible residue classes $a$ and $b$, since for example $0$ mod $5$ or $9$ mod $27$ are admissible but not reduced. We believe that \Cref{thm:infinitude-aaabbb} should hold if $a$ and $b$ are only assumed to be admissible, and that our technique (along with an appropriate analog of McGrath's work in \cite{MR4498475-McGrath}) should suffice, but in the interest of brevity we address only this case.

Banks, Freiberg, and Turnage-Butterbaugh \cite{MR3316460-banks-freiberg-turnage-butterbaugh} derive the infinitude of $\pi(x;q,[a,\dots,a])$ from Maynard's work in \cite{MR3272929-Maynard-small-gaps} which shows that for any $m$, for large enough $k$, and for any admissible $k$-tuple $\{h_1, \dots, h_k\}$, there exist infinitely many $n$ such that $m$ of the $n+h_i$'s are simultaneously prime. The key idea is to choose a tuple $h_1, \dots, h_k$ where all elements are congruent mod $q$, and then choose $n$ to lie in a certain arithmetic progression. If every $h_i$ is congruent mod $q$, then any subset of $m$ of the $h_i$'s are also congruent mod $q$; for this reason, their method is restricted to constant patterns. Using instead McGrath's more refined result for sums of two squares, we can derive the infinitude of $N(x;q,\mathbf a)$ for patterns $\mathbf a$ that change value once, by having different congruence conditions for different bins. However, this technique at present does not extend to any specific pattern taking at least three values, since we cannot control how many sums of two squares appear in each bin. In other words, it is difficult to distinguish between a pattern of the form $[a,b,c]$ and one of the form $[a,b,b,c]$, since any bin may contain \emph{more} than one sum of squares.

However, the case of $r = 3$ can be addressed directly.
\begin{theorem}\label{thm:infinitude-abc}
    Let $q \ge 1$ be a modulus and let $a,b,$ and $c$ be admissible residue classes. Then $N(x;q,[a,b,c])$ tends to infinity with $x$.
\end{theorem}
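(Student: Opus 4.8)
The plan is to choose positive integers $g_1,g_2$ with $g_1\equiv b-a$ and $g_2\equiv c-b\pmod q$ and to exhibit infinitely many integers $m$ such that $m$, $m+g_1$ and $m+g_1+g_2$ are all sums of two squares while none of the intermediate integers $m+i$ with $1\le i\le g_1+g_2-1$, $i\neq g_1$, is a sum of two squares; taking $m\equiv a\pmod q$ then forces $m$, $m+g_1$, $m+g_1+g_2$ to be three consecutive terms $E_n,E_{n+1},E_{n+2}$ lying in the classes $a,b,c$.

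The intermediate integers are handled by a covering argument. To each position $i\in\{1,\dots,g_1+g_2-1\}\setminus\{g_1\}$ attach a prime $p_i\equiv 3\pmod 4$, the $p_i$ chosen distinct, larger than $g_1+g_2$, and coprime to $q$; imposing $m\equiv p_i-i\pmod{p_i^2}$ makes $p_i\parallel m+i$, so $m+i$ is not a sum of two squares. Together with $m\equiv a\pmod q$, the Chinese remainder theorem packages all of this into a single congruence $m\equiv m_0\pmod R$ with $R=q\prod_i p_i^2$; since $a,b,c$ are admissible mod $q$ and since $m_0,\,m_0+g_1,\,m_0+g_1+g_2$ are units modulo each $p_i^2$, all three of these are admissible mod $R$, as they must be. It remains to choose $g_1,g_2$ so that the progression $m\equiv m_0\pmod R$ contains infinitely many $m$ with $m$, $m+g_1$, $m+g_1+g_2$ simultaneously sums of two squares.

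To produce these, realize the three values as norms of three collinear Gaussian integers in arithmetic progression: $m=|z|^2$, $m+g_1=|z+w|^2$, $m+g_1+g_2=|z+\lambda w|^2$ for a fixed nonzero $w\in\ZZ[i]$ and a fixed integer $\lambda\ge 2$, where $z$ ranges over the lattice points on a line $\ell$ essentially perpendicular to $w$ (so that the gaps stay bounded as $z\to\infty$). All three values are then automatically sums of two squares, and the constraints $g_1\equiv b-a$, $g_2\equiv c-b$, $m\equiv a$ modulo $q$ become, for each choice of $w$ and $\lambda$, two congruences on the bounded data defining $\ell$ plus the condition that the line $\ell$ meet the conic $X^2+Y^2\equiv a\pmod q$ — equivalently, that a certain explicit expression $D_\lambda(a,b,c)$ be a square mod $q$. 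When this holds, one runs $z$ along $\ell$ in the residue class dictated by $R$ (consistent since the $p_i$ are coprime to $q$, and where the conditions modulo the $p_i^2$ are arranged by choosing each $p_i$ so that the relevant quadratic congruence is solvable), producing infinitely many $z$ and hence infinitely many admissible $m$.

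The crux is the arithmetic condition: one must verify that for every $q$ and every admissible triple $(a,b,c)$ there is a choice of the auxiliary data $w,\lambda$ (and of $g_1,g_2$ among the admissible residues, using the admissibility of $a,b,c$ at the prime $2$ when $q$ is even) making $D_\lambda(a,b,c)$ a square mod $q$. After reduction to prime powers, for a large prime $\ell\parallel q$ the count of $\lambda\bmod\ell$ with this property is $\tfrac{\ell}{2}+O(\sqrt\ell)$ by Weil's bound, hence positive; the finitely many small primes, and the $2$-power part, are dealt with by direct inspection (or a minor variant of the construction). The remaining ingredients — the covering system, the Chinese remainder bookkeeping, and the deduction that infinitely many $z$ yield infinitely many $n$ with $E_{n+i-1}\equiv a_i$ — are routine.
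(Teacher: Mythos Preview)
Your covering-prime reduction is the same as the paper's: lift to a larger modulus $R$ divisible by $p_i^2$ for distinct primes $p_i\equiv 3\bmod 4$, force $p_i\parallel m+i$ for the intermediate positions, and reduce to producing infinitely many $m$ in a fixed class mod $R$ with $m,\,m+g_1,\,m+g_1+g_2$ all sums of two squares. That part is fine and matches the paper exactly.

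Where you diverge is in how you attack the reduced problem, and the divergence is interesting. The paper parametrizes so that $n\equiv a\bmod q$ and $n,\,n+h$ are sums of two squares \emph{by construction} (writing $n=x(t)^2+y(t)^2$ with $x,y$ running along a carefully chosen line), and is then left with a quadratic polynomial $F(t)=n(t)+k$ which it must show represents sums of two squares infinitely often; for this it invokes the Friedlander--Iwaniec theorem. You instead parametrize so that all \emph{three} values are norms $|z|^2,\,|z+w|^2,\,|z+\lambda w|^2$ by construction, and are left needing $|z(t)|^2$ to hit a fixed class mod $R$, which is merely a quadratic congruence in $t$. If your approach goes through it is genuinely more elementary: no Friedlander--Iwaniec, only Hensel, CRT, and a character-sum estimate.

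The gap is that the step you label ``the crux'' is not actually carried out. You assert that the solvability condition reduces to an explicit quantity $D_\lambda(a,b,c)$ being a square mod $q$, and that a Weil bound gives $\tfrac{\ell}{2}+O(\sqrt{\ell})$ good values of $\lambda$ modulo each large prime $\ell\mid q$. But you have not written down $D_\lambda$, checked that it is a nonconstant non-square polynomial in $\lambda$ (otherwise the Weil bound is vacuous or false), bounded its degree so that the error term does not swamp the main term, or addressed the interaction with the simultaneous constraint $g_2\equiv(\lambda-1)g_1+\lambda(\lambda-1)|w|^2\bmod q$ (which couples $\lambda$ to $|w|^2$, itself constrained to be a value of $u^2+v^2$). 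For primes $\ell\equiv 3\bmod 4$ dividing $q$, for higher prime powers $\ell^{\nu}\| q$, and for the $2$-part, ``direct inspection'' and ``a minor variant of the construction'' are placeholders, not arguments. In the paper the analogous local analysis --- finding $x_0,y_0,u,v$ satisfying the coprimality conditions \eqref{eq-3} at every prime, including a delicate treatment of $p=2$ --- is precisely the technical heart of the proof of \Cref{prop-hk} and occupies the bulk of it. Your outline relocates that difficulty rather than removing it; as written it is an architecture, not a proof.
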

This theorem relies crucially on Hooley's \cite{MR325557-Hooley-II} work on triple correlations of sums of two squares. Hooley showed that for any $h_1, h_2, h_3 \in \N$, 
\[\sum_{n \in \N} \mathbb{1}_{\mathbf E}(n+h_1)\mathbb{1}_{\mathbf E}(n+h_2)\mathbb{1}_{\mathbf E}(n+h_3)\]
is infinite, which he proved using the theory of quadratic forms. Hooley's result is close to what we are trying to prove, except that the sums of two squares $n+h_1, n+h_2,$ and $n+h_3$ are not necessarily consecutive, and do not necessarily lie in the correct congruence classes mod $q$. After restricting $n$ to an arithmetic progression, some of the results on quadratic forms used by Hooley no longer apply, and we need to rely instead on results on equidistribution of the roots of a quadratic polynomial. Precisely, we prove the following:
\begin{proposition}\label{prop-hk}
Let $q$ be a positive integer, and denote $q = \prod_{p}p^{\nu_p}$.
Let $a \in\ZZ/q\ZZ$ and let $h,k\in \NN$ be such that $a,a+h,a+k$ are all admissible mod $q$.
Assume that $\nu_p$ is even for all $p\equiv 3 \bmod 4$.
Assume also that $\nu_2$ is even and that
\begin{align}\label{eq:cond_mod2}
    \begin{split}
    &a\not \equiv 0 \bmod{2^{\nu_2 - 1}}\\
    &(a+h) \not \equiv 0 \bmod{2^{\nu_2 - 1}}\\
    &(a+k) \not \equiv 0 \bmod{2^{\nu_2 - 1}}.
    \end{split}
\end{align}
Then there are infinitely many integers $n$ such that $n\equiv a \bmod q$ and $n,n+h,n+k$ are all sums of two squares.
\end{proposition}

Hooley's work was also adapted in \cite{MR2764402-brudern-dietmann} to show that there are infinitely many triples of consecutive sums of two squares with specified gaps.
It would be interesting to further investigate the distribution of consecutive sums of two squares in arithmetic progressions. It is very likely possible to show that $N(x;q,\mathbf a) \gg N(x)$ whenever $\mathbf a$ satisfies the assumptions of \Cref{thm:infinitude-aaabbb}; the authors intend to address this in a follow-up paper. It is also not known whether $N(x;q,\mathbf a)$ tends to infinity for any pattern $\mathbf a$ of length $\ge 4$ not satisfying the assumptions of \Cref{thm:infinitude-aaabbb}.

Finally, the same setup could be applied to other sequences. The works of both Hooley and McGrath rely on using the representation function $r(n)$, defined as the number of ways to write $n = x^2 + y^2$, as a weighted version of $\mathbb{1}_{\mathbf E}$. It is not clear what, if anything, may be said about arbitrary sets arising by sieving the integers by a set of primes of density $\delta \le 1/2$, for example. 

\section{Results for two distinct congruence classes: proof of \autoref{thm:infinitude-aaabbb}}

We begin by recalling the result of McGrath \cite{MR4498475-McGrath} on sums of two squares in separate bins of an admissible tuple.

\begin{theorem}[McGrath \cite{MR4498475-McGrath}, Propositions 5.2 and 6.2 ]\label{thm:mcgrath}
Let $\mathcal H^* := \{h_1, h_2, \dots, h_k\}$ be an admissible tuple such that each $h_i$ is divisible by $4$. Fix real numbers $\theta_1,\theta_2$ subject to $0 < \theta_1+ \theta_2<1/18$ and define the constant
\[\Delta = \Delta(\theta_1,\theta_2) = \frac{\sqrt{2}(\pi +2)}{32\pi} \frac{1+\theta_1}{\sqrt{\theta_1\theta_2}}.\]

Fix a partition $\mathcal H^* = B_1 \cup B_2 \cup \cdots B_M$ where $|B_1| \ge 2\Delta^3$ and for $i \ge 2$, $|B_i| > 2^{7i}$. 

Then there exist elements $h_{a_1}, \dots, h_{a_M}$ and infinitely many integers $n$ such that $h_{a_j} \in B_j$ and $n+h_{a_j}$ is a sum of two squares for $1 \le j \le M$.
\end{theorem}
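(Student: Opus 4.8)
The statement is a Maynard--Tao type theorem for the set $\mathbf E$ of sums of two squares, with the detected elements distributed one per bin, so the plan is to run the multidimensional GPY sieve with the representation function $r(n) = 4\sum_{d\mid n}\chi(d)$ (here $\chi$ is the nonprincipal character modulo $4$) as a smooth, analytically tractable proxy for $\mathbb{1}_{\mathbf E}$, and then to convert the sieve output into the bin conclusion by a moment argument driven by the pair correlations of $\mathbf E$. First I would apply the $W$-trick: set $W = \prod_{p\le w}p$ and choose a residue $v_0 \bmod W$ so that each $v_0 + h_i$ is compatible with being a sum of two squares (this is where divisibility of the $h_i$ by $4$ is used, to handle the prime $2$), and restrict to $n\equiv v_0\pmod W$. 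For a smooth $F$ supported on the simplex $\{\,\mathbf t\in[0,1]^k:\sum_i t_i\le 1\,\}$ I set $\lambda_{\mathbf d} = \big(\prod_i\mu(d_i)\big)F\big(\tfrac{\log d_1}{\log R},\dots,\tfrac{\log d_k}{\log R}\big)$, with $R$ a fixed power of $x$ whose exponent is dictated by $\theta_1,\theta_2$, and form the nonnegative weights $w_n = \big(\sum_{d_i\mid n+h_i}\lambda_{\mathbf d}\big)^2$.

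The argument then reduces to asymptotics, over $n\le x$ with $n\equiv v_0\pmod W$, for three families of sums: the total mass $S_1 = \sum_n w_n$, the one-point counts $S_2^{(i)} = \sum_n \mathbb{1}_{\mathbf E}(n+h_i)w_n$, and the two-point counts $S_2^{(i,i')} = \sum_n\mathbb{1}_{\mathbf E}(n+h_i)\mathbb{1}_{\mathbf E}(n+h_{i'})w_n$. The sum $S_1$ is the usual Selberg diagonalization, giving $S_1\sim c\,\tfrac{x}{W}(\log R)^k I_k(F)$ with $I_k(F)=\int F^2$. The one- and two-point counts are the analytic heart: expanding $\mathbb{1}_{\mathbf E}$ through $r = 4(\mathbf 1 * \chi)$ and inserting the divisibility conditions, one must show that $r(n+h_i)$, and the product detecting two simultaneous sums of two squares, equidistribute over progressions to moduli up to $R$. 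This is exactly where $\theta_1+\theta_2 < 1/18$ enters: splitting the relevant convolutions into ranges governed separately by $\theta_1$ and $\theta_2$ and applying a dispersion/bilinear estimate yields the needed level of distribution for $\mathbf E$, with the density constant $\tfrac{\pi}{4}$ of the sums of two squares producing the factor $(\pi+2)/\pi$ in $\Delta$. The constant $\Delta(\theta_1,\theta_2)$ packages the resulting normalized ratio of the two-point to the squared one-point mass, i.e. the pair-correlation constant of $\mathbf E$ as seen by the sieve.

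With these estimates in hand the bin conclusion is a moment argument. For each bin set $X_j(n) = \sum_{h_i\in B_j}\mathbb{1}_{\mathbf E}(n+h_i)$, so that $B_j$ contains a sum of two squares precisely when $X_j(n)\ge 1$. The $w$-weighted first moment of $X_j$ is $\sum_{h_i\in B_j}S_2^{(i)}$ and its second moment is $\sum_{h_i,h_{i'}\in B_j}S_2^{(i,i')}$; feeding these into a Paley--Zygmund type inequality bounds the $w$-weighted proportion of $n$ for which $X_j(n)=0$ in terms of $|B_j|$ and $\Delta$. The condition $|B_1|\ge 2\Delta^3$ makes this emptiness proportion for the first bin small, and the geometric growth $|B_i|>2^{7i}$ is calibrated so that, handling the bins sequentially rather than by a flat union bound, the emptiness proportion of $B_i$ decays like $2^{-i}$; summing, the total $w$-weighted proportion of $n$ with some empty bin is strictly less than $1$. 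Hence a positive proportion of the weighted mass sits on $n$ with $X_j(n)\ge 1$ for every $j$, and since $S_1\to\infty$ as $x\to\infty$ this produces infinitely many such $n$, from which the representatives $h_{a_j}\in B_j$ are read off.

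The hard part is the level-of-distribution input for $\mathbf E$ in arithmetic progressions underlying the one- and two-point counts: proving that $r(n)$ and its pair correlations equidistribute over moduli as large as a fixed positive power of $x$ is the genuine analytic obstacle, and the lossy exponent $1/18$ reflects the strength of the available bilinear/dispersion machinery rather than any slack in the sieve. Two further technical points demand care. First, the moment bookkeeping that converts the pair-correlation constant $\Delta$ into geometrically small emptiness proportions---and so fixes the precise thresholds $2\Delta^3$ and $2^{7i}$---must be done sequentially across bins, since the number of bins $M$ grows with $k$ and a single union bound over the bins would not close. Second, the passage between the smooth weight $r$ and the sharp indicator $\mathbb{1}_{\mathbf E}$, together with the compatibility of the $W$-trick with admissibility and with divisibility of the $h_i$ by $4$, has to be arranged so that no main term is lost.
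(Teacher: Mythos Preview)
The paper does not contain a proof of this theorem; it is quoted verbatim from McGrath's work, as the attribution ``McGrath, Propositions 5.2 and 6.2'' indicates. There is therefore no proof in the present paper against which to compare your attempt. The authors use McGrath's result as a black box: the subsequent Theorems~\ref{thm:mcgrath-but-linear-forms} and~\ref{thm:consecutive-mcgrath} make small modifications to McGrath's sieve setup (restricting $n$ to a further residue class), but the core sieve asymptotics and the bin argument are not reproduced here.

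Your sketch is a reasonable outline of the kind of argument that lies behind McGrath's theorem---a Maynard--Tao multidimensional sieve with $r(n)$ replacing a prime indicator, combined with a second-moment argument over bins in the spirit of Banks--Freiberg--Maynard. One point of caution: the bin conclusion in this family of results is not typically derived via a Paley--Zygmund inequality applied bin by bin, but rather by constructing a single signed sum of the form $\sum_n\big(\sum_j c_j X_j(n) - \rho\big)w_n$ (or an inductive variant thereof) and showing it is positive; the specific thresholds $2\Delta^3$ and $2^{7i}$ come out of that calculation. Your handling of the sequential-versus-union-bound issue is in the right spirit, but the mechanism is somewhat different from what you describe. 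In any case, the substantive analytic work---the level-of-distribution input for $r$ and its pair correlations that forces the exponent $1/18$---is indeed the hard part, and it is carried out in McGrath's paper, not in this one.
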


For what follows, we will need a slight variation of \Cref{thm:mcgrath}, where the $n$ guaranteed by the conclusion are required to be divisible by a fixed integer $g$.

\begin{theorem}[McGrath \cite{MR4498475-McGrath}, slight variation]\label{thm:mcgrath-but-linear-forms}
Let $g \ge 1$ be an odd integer, and let $\mathcal H^* := \{h_1, h_2, \dots, h_k\}$ be an admissible tuple such that $4|h_i$ and $(h_i,g) = 1$. Fix real numbers $\theta_1,\theta_2$ subject to $0 < \theta_1+ \theta_2<1/18$ and define the constant
\[\Delta = \Delta(\theta_1,\theta_2) = \frac{\sqrt{2}(\pi +2)}{32\pi} \frac{1+\theta_1}{\sqrt{\theta_1\theta_2}}.\]

Fix a partition $\mathcal H^* = B_1 \cup B_2 \cup \cdots B_M$ where $|B_1| \ge 2\Delta^3$ and for $i \ge 2$, $|B_i| > 2^{7i}$. 

Then there exist elements $h_{a_1},\dots, h_{a_M}$ and infinitely many integers $n$ such that $h_{a_j} \in B_j$ and $gn+h_{a_j}$ is a sum of two squares for $1 \le j \le M$. 
\end{theorem}
\begin{proof}
In the sieve set-up for the proof of \Cref{thm:mcgrath}, found on page 14 of \cite{MR4498475-McGrath}, all sieve sums are restricted to $n$ lying in a fixed residue class $v_0$ mod $W$ such that $(v_0 + h_i,W) = 1$ for each $i$, where $W$ is a product over small primes. The existence of $v_0$ is guaranteed by the admissibility of the set $\mathcal H$.

In our setting, by the admissibility of $\mathcal H^*$ and the fact that $(h_i,g) = 1$ for all $i$, we can further guarantee the existence of $v_0$ mod $[g,W]$, such that $(v_0 + h_i,W) = 1$ for each $i$ and $g|v_0$. The arguments of \cite{MR4498475-McGrath} yield elements $h_{a_1},\dots,h_{a_M}$ and infinitely many integers $n \equiv v_0 \mod [g,W]$ such that $h_{a_j} \in B_j$ and $n + h_{a_j}$ is a sum of two squares for all $j$; by our choice of $v_0$, each $n$ can be written as $gn'$, so we have infinitely many integers $n'$ such that $gn'+h_{a_j}$ is a sum of two squares for all $j$, as desired.
\end{proof}

We now strengthen this result further, by adding to the conclusion the constraint that the resulting sums of two squares are consecutive, following the ideas of \cite{MR3316460-banks-freiberg-turnage-butterbaugh} and \cite{MR3530450-Maynard-dense-clusters} in the prime case.

\begin{theorem}\label{thm:consecutive-mcgrath}
Let $g \ge 1$ be an odd integer, and let $\mathcal H := \{h_1, h_2, \dots, h_k\}$ be an admissible tuple of increasing positive integers such that $4|h_i$, and $(h_i,g) = 1$. Fix real numbers $\theta_1,\theta_2$ subject to $0 < \theta_1+ \theta_2<1/18$ and define the constant
\[\Delta = \Delta(\theta_1,\theta_2) = \frac{\sqrt{2}(\pi +2)}{32\pi} \frac{1+\theta_1}{\sqrt{\theta_1\theta_2}}.\]

Fix a partition $\mathcal H = B_1 \cup B_2 \cup \cdots B_M$ where $|B_1| \ge 2\Delta^3$ and for $i \ge 2$, $|B_i| > 2^{7i}$. 

Then there exist elements $h_{a_1},\dots, h_{a_M}$ and infinitely many integers $n$ such that $h_{a_j} \in B_j$ for all $j$, $gn+h_{a_j}$ is a sum of two squares for all $j$, and for any integer $t$ with $h_1 < t < h_k$, if $gn+t$ is a sum of two squares, then $t \in \mathcal H$. 
\end{theorem}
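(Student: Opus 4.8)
The plan is to deduce \Cref{thm:consecutive-mcgrath} from \Cref{thm:mcgrath-but-linear-forms} by the standard device of ``padding'' the tuple $\mathcal H$ with enough auxiliary shifts so that the sieve is forced to produce a string of sums of two squares uninterrupted by any \emph{extra} sum of two squares lying strictly between $h_1$ and $h_k$. Concretely, fix a large parameter $H$ (to be chosen in terms of $h_k$, $g$, $\Delta$ and the bin sizes) and enlarge $\mathcal H$ to a tuple $\mathcal H' = \mathcal H \cup \{h_1 + d : d \in \mathcal D\}$ for a suitable set of additional shifts, all divisible by $4$ and coprime to $g$, chosen so that the set of linear forms $\{gn + h : h \in \mathcal H'\}$ is still admissible --- this is possible because one can always add elements to an admissible tuple of linear forms while keeping it admissible, by avoiding finitely many residue classes at each small prime. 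The role of $\mathcal D$ is to ``cover'' every integer $t$ with $h_1 < t < h_k$: for the argument to work we will want, for each such $t$, to be able to impose that $gn + t$ is \emph{not} a sum of two squares whenever $t \notin \mathcal H$, and this is arranged by a sieve weight that kills the bad $n$.

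The key steps, in order, are as follows. First, following Maynard \cite{MR3530450-Maynard-dense-clusters} and Banks--Freiberg--Turnage-Butterbaugh \cite{MR3316460-banks-freiberg-turnage-butterbaugh}, incorporate into the weighted sum underlying \Cref{thm:mcgrath-but-linear-forms} an extra factor that detects when some $gn+t$, for $t \in (h_1,h_k) \setminus \mathcal H$, is a sum of two squares; equivalently, add to the sieve majorant a correction term $\sum_{h_1 < t < h_k,\ t \notin \mathcal H} \mathbb{1}_{\mathbf E}(gn+t)$ and show that its contribution to the weighted count is smaller than the main term from \Cref{thm:mcgrath-but-linear-forms}. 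Second, bound that correction term: for each fixed $t$, the number of $n \le x$ in the relevant progression with $gn+t$ a sum of two squares is $O\!\left(\frac{x}{\sqrt{\log x}}\right)$ by the Landau-type count for sums of two squares in arithmetic progressions, and summing over the $O(h_k)$ values of $t$ gives a bound that is $o$ of the main term, which is of order $x/(\log x)^{k/2}$ times the sieve weight normalization --- provided $h_k$ is allowed to grow slowly enough, or is treated as fixed while $x \to \infty$; since $h_k$ is fixed in the statement, this is immediate. Third, conclude that for infinitely many $n$ the full weight is positive while the correction vanishes, which forces: (a) some $h_{a_j} \in B_j$ with $gn + h_{a_j}$ a sum of two squares for each $j$ (from the main term), and (b) no $t \in (h_1,h_k)$ with $gn+t$ a sum of two squares unless $t \in \mathcal H$ (from the vanishing correction). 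This is exactly the conclusion of \Cref{thm:consecutive-mcgrath}.

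One technical point to handle carefully is the interaction between the enlargement $\mathcal H \mapsto \mathcal H'$ and the bin structure: the partition $\mathcal H = B_1 \cup \dots \cup B_M$ with the size constraints $|B_1| \ge 2\Delta^3$, $|B_i| > 2^{7i}$ must be preserved, so the new shifts in $\mathcal D$ should be thrown into the bins (or into $B_1$) in a way that does not disturb the hypotheses of \Cref{thm:mcgrath-but-linear-forms} --- keeping all $B_i$ the same and only using the sieve \emph{majorant} on the enlarged support, rather than literally re-partitioning, is cleanest: the selection still happens among the original $\mathcal H$. The other point is admissibility: adding the shifts of $\mathcal D$ and requiring $4 \mid (h_1 + d)$, $(h_1 + d, g) = 1$, and joint admissibility of all linear forms is a finite local condition at each prime, satisfiable by the Chinese Remainder Theorem as long as $\mathcal D$ avoids finitely many residues mod each small prime; since we only need \emph{existence} of such a $\mathcal D$ covering the interval $(h_1, h_k)$, and that interval is finite, there is no obstruction.

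The main obstacle I anticipate is bookkeeping the error term precisely enough inside McGrath's sieve framework: one must verify that inserting the indicator correction $\sum_t \mathbb{1}_{\mathbf E}(gn+t)$ does not destroy the asymptotic evaluation of the sieve sums --- i.e., that the Bombieri--Vinogradov-type input McGrath uses for sums of two squares still controls the cross terms, and that the crude upper bound $x/\sqrt{\log x}$ per shift genuinely beats the main term of size $\asymp x/(\log x)^{k/2} \cdot (\text{sieve weight})$. Because $k$ is large and fixed while the number of correction shifts $O(h_k)$ is also fixed, this comparison is comfortable, but stating it cleanly requires threading the normalization constants of the Maynard--Tao / GPY-type weight through McGrath's argument, which is the part that needs care rather than cleverness.
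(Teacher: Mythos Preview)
Your proposal has a genuine gap, and the paper's approach is quite different and much simpler.

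The crucial error is in your second key step. You claim that the unweighted count $\#\{n \le x : gn + t \in \mathbf E\} = O(x/\sqrt{\log x})$, summed over $O(h_k)$ values of $t$, is $o$ of a main term ``of order $x/(\log x)^{k/2}$ times the sieve weight normalization.'' But $x/\sqrt{\log x}$ is \emph{larger} than $x/(\log x)^{k/2}$ for any $k \ge 2$, not smaller; your crude correction bound dominates rather than is dominated by the main term. An unweighted Landau-type count cannot possibly beat the (much smaller) weighted sieve sum. To make a Maynard-style ``no extra primes/sums of two squares'' argument work, one must bound the \emph{weighted} sum $\sum_n \mathbb 1_{\mathbf E}(gn+t)\,w_n$ and show it is small compared to $\sum_n w_n$ --- this requires a genuine upper-bound sieve estimate inside the GPY/Maynard weight, not a crude density bound, and you have not supplied one. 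Your first paragraph about padding with an auxiliary set $\mathcal D$ is also disconnected from the rest of the argument; it is never used, and it is unclear what ``covering'' $(h_1,h_k)$ by extra shifts would accomplish.

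The paper avoids all of this with a short congruence trick. For each $t \in (h_1,h_k)$ with $t \notin \mathcal H$, choose a distinct prime $q_t \equiv 3 \bmod 4$ large enough that $t \not\equiv h_j \bmod q_t$ for every $j$. By the Chinese Remainder Theorem, pick $a$ with $ga + t \equiv q_t \bmod q_t^2$ for every such $t$; then $q_t \nmid ga + h_j$ for all $j$. Set $Q = \prod_t q_t^2$ and apply \Cref{thm:mcgrath-but-linear-forms} to the linear forms $gQn + (ga + h_j)$. Every $n$ produced satisfies $g(Qn + a) + t \equiv q_t \bmod q_t^2$, so $q_t \parallel g(Qn+a)+t$, which forces $g(Qn+a)+t \notin \mathbf E$ for every $t \notin \mathcal H$. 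No sieve error comparison is needed: non-membership in $\mathbf E$ is guaranteed \emph{deterministically} by the arithmetic progression, which is exactly the mechanism in \cite{MR3316460-banks-freiberg-turnage-butterbaugh} adapted to the $3 \bmod 4$ criterion for sums of two squares.
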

\begin{proof}
Let 
\begin{equation*}
S:=\{t \in \N: h_1 \le t \le h_k: t \not\in \{h_1,\dots,h_k\}\}.
\end{equation*}
Define a set of distinct primes $\{q_t:t\in S\}$ such that for each $t$, $q_t \equiv 3 \mod 4$ and $t \not\equiv h_j \mod q_t$ for all $t$. 
We also choose $q_t$ to satisfy $q_t> g$ so that $(g,q_t)$ = 1.
By the Chinese Remainder Theorem, there exists an integer $a$ with $ga+t \equiv q_t \mod q_t^2$ for all $t \in S$. In particular, $ga+t \equiv 0 \mod q_t$. Since $t\not\equiv h_j \mod q_t$, for all $t$ and for all $j$, $ga + h_j \not\equiv 0 \mod q_t$.

Define $\mathcal A(x):= \{gQx + ga+h_j\}_{j=1}^k$, where $Q :=\prod_{t\in S}q_t^2$. The tuple $\{ga + h_j\}_{j=1}^k$ satisfies the constraints of \Cref{thm:mcgrath} with respect to the modulus $gQ$. Moreover, by our choice of $Q$, $g(QN+a)+t \equiv q_t \mod q_t^2$, and thus $g(QN+a)+t$ is not a sum of two squares for any $h_1 < t < h_k$ with $t \not\in \mathcal H$. This completes the proof.
\end{proof}

We are now ready to prove \Cref{thm:infinitude-aaabbb}.

\begin{proof} [Proof of \Cref{thm:infinitude-aaabbb}]
For a tuple $\mathbf a$ of length $r$ satisfying the assumptions of \Cref{thm:infinitude-aaabbb}, let $r_a$ denote the number of $a$'s occurring in $\mathbf a$ and let $r_b$ denote the number of $b$'s occurring in $\mathbf a$.

Let $k$ be large enough that \Cref{thm:consecutive-mcgrath} applies with $M = r$. We will construct a tuple $\mathcal H = \{h_1, \dots, h_k\}$ as follows. First, we can split the index set $[1,k]$ into bins $B_1 \cup \cdots \cup B_M$ so that the size of the bins satisfies the hypotheses of \Cref{thm:consecutive-mcgrath}. Then for each $i$, choose $h_i > h_{i-1}$ such that $4|h_i$, such that $h_i \equiv a \mod q$ if $i \in B_j$ with $a_j = a$, such that $h_i \equiv b \mod q$ if $i \in B_j$ with $a_j = b$, and such that $\{qx+h_1,\dots, qx+h_k\}$ is an admissible tuple of linear forms. 

By \Cref{thm:consecutive-mcgrath}, there exist infinitely many tuples $(qn+h_{i_1},\dots, qn+h_{i_L})$ of consecutive sums of two squares such that at least one sum of two squares lies in each bin $B_j$; note that $L$ may be bigger than $M$, so that each bin may contain multiple sums of two squares. However, by the bin constraint, the first $r_a$ entries in each tuple $(qn+h_{i_1},\dots,qn+h_{i_L})$ must all be congruent to $a \mod q$ and the last $r_b$ must all be congruent to $b \mod q$, and every $qn+h_i$ is congruent to either $a$ or $b \mod q$. Moreover, by construction of $\mathcal H$, there is only one transition point from $h_{i_j} \equiv a \mod q$ to $h_{i_j}\equiv b\mod q$. Thus there is a sub-tuple of each $(qn+h_{i_1},\dots,qn+h_{i_L})$ satisfying the desired constraints, so there must be infinitely many.
\end{proof}

\section{3-Tuples: Proof of \autoref{thm:infinitude-abc}}
In this section we prove \Cref{thm:infinitude-abc}.
That is, we show that for any $q$, and any admissible $a,b,c \in \ZZ/q\ZZ$, the pattern $[a,b,c]$ appears in the sequence $(E_n \bmod q)$ infinitely often. 

\subsection{Notation}
Throughout this section we will use $(\cdot,\cdot)$ brackets to denote the greatest common divisor.
We will use $[\cdot,..., \cdot]$ brackets to denote a tuple of elements.
We will denote $n^k\| m$ to indicate that $n^k\mid m$ but $n^{k+1}\nmid m$.
For $a\in \ZZ/q\ZZ$, the reduction $a\bmod d$ is well defined for every $d\mid q$.
For any such $d$, we write $d\mid a$ if $a\equiv 0 \bmod d$.

\subsection{Proof of \autoref{thm:infinitude-abc}}
We first show how \Cref{thm:infinitude-abc} follows from \Cref{prop-hk}.
\begin{proof}
Let $q$ be a positive integer, and let $a,b,c$ be admissible mod $q$.
The congruence classes $a,b,c \bmod q$ can be lifted to admissible $a_2, b_2, c_2 \bmod q^2$.
We now show that these can then be lifted to admissible $a_3,b_3,c_3\bmod 4q^2$ which also satisfy the condition \eqref{eq:cond_mod2}.
Indeed, denote $q^2 = 2^\alpha r$ with $r$ odd and $a_2 \equiv 2^\beta \bmod 2^\alpha$ with $\beta \leq \alpha$.
If $\beta < \alpha$ then any admissible lift of $a_2$ to $a_3 \bmod 4q^2$ will satisfy \eqref{eq:cond_mod2}.
In the case $\beta = \alpha$, of the four possible lifts of $a_2$ to $a_3 \bmod 4q^2$ we consider the one which satisfies $a_3 \equiv 2^\alpha \bmod 2^{\alpha + 2}$.
This choice is both admissible and satisfies \eqref{eq:cond_mod2}.
The same argument applies to lifting $b_2,c_2$ to $b_3,c_3 \bmod 4q^2$.
We then fix $h<k\in\NN$ such that $a_3 + h\equiv b_3 \bmod 4q^2$ and $a_3 + k\equiv c_3 \bmod 4q^2$.

Let $\{p_i:1 \le i < k, i \ne h\}$ be a set of distinct primes with $p_i \nmid q$, $p_i>k$, and $p_i \equiv 3 \bmod 4$. Set
$$
T = 4q^2 \cdot \prod_{\substack{1\leq i < k \\ i\neq h}}p_i^2.
$$
Let $a_T\in \ZZ/T\ZZ$ be the congruence class satisfying $\alpha_T \equiv a_3 \bmod 4q^2$ and $a_T \equiv p_i -i  \bmod p_i^2$.
This choice of $a_T$ implies that $a_T, a_T + h, a_T + k$ are all admissible mod $T$, and that the requirements of \Cref{prop-hk} are satisfied.
Furthermore, for each $1\leq i \leq k, i\neq h,k$, $a_T + i$ is \emph{not} admissible mod $T$ because $a_T + i \equiv p_i \bmod p_i^2$. Thus if an integer $n$ satisfies $n\equiv a_T \bmod T$ and $n,n+h,n+k$ are sums of two squares, then they must be consecutive sums of two squares.

By \Cref{prop-hk}, there exist infinitely many $n \equiv a_T \bmod T$ such that $n, n+h$, and $n+k$ are all sums of two squares, and thus consecutive sums of two squares. Since $n\equiv a_T\bmod T$, we also have $n\equiv a \bmod q$, and thus by our choice of $h$ and $k$, $n+h\equiv b\bmod q$ and $n+k\equiv c\bmod q$.
It follows that there are infinitely many indices $t$ such that
\begin{equation*}
E_t \equiv a,\; E_{t+1} \equiv b,\; 
E_{t+2} \equiv c \; \bmod q. \qedhere
\end{equation*}
\end{proof}

We now prove \Cref{prop-hk}.
\begin{proof}
We use the notations from the statement of the proposition.

Since $a$ is admissible mod $q$, there exists a solution $x_0,y_0\in\ZZ$ to
\begin{equation}\label{eq-1}
x_0^2 + y_0^2 \equiv a \bmod q.
\end{equation}
Since $a + h$ is also admissible mod $q$, there exist $u,v\in \ZZ$ such that
\begin{equation}\label{eq-2}
(x_0 + u)^2 + (y_0 + v)^2 \equiv a +h\bmod q.
\end{equation}

We claim that $x_0,y_0,u,v$ can be chosen such that the following conditions are satisfied:
\begin{align}\label{eq-3}
\begin{split}
(u,v) &\mid 2(x_0,y_0),\\ 
(u,v) &\mid 2^{\nu_2 / 2 - 1}\prod_{\substack{p\mid q \\ p\equiv 3 \bmod 4}}p^{\nu_p / 2}. 
\end{split}
\end{align}

To show this, we consider \eqref{eq-1} and \eqref{eq-2} modulo different primes.
\begin{case}[Primes not dividing $q$]
We can always ensure that $(u,v)\mid q$.
Indeed, if $u_0,v_0$ is a solution, then for every $p\mid v_0$ such that $p\nmid q$, there exists $t_p$ with $p\nmid u_0+t_p q$.
By choosing $t\in\ZZ$ such that $t\equiv t_p\bmod p$ for all $p\mid v_0$, we get that $(u_0+tq,v_0)\mid q$.
Thus $v = v_0$, $u = u_0 + kq$ is a solution with $(u,v)\mid q$.

\end{case}

\begin{case}[Primes $p\equiv 1\bmod 4$ that divide $q$]
There exists a solution $x_0,y_0$ to \eqref{eq-1} such that $[x_0,y_0]\not\equiv [0,0]\bmod p$, even if $a \equiv 0 \bmod p$.
So we can assume that $p\nmid (x_0,y_0)$ for any $p \equiv 1\bmod 4$ with $p|q$.

If $p\nmid h$, then we must have $[u,v]\not\equiv [0,0]\bmod p$ which implies $p\nmid (u,v)$.
As for the case $p\mid h$, we can choose 
$$
[u,v]\equiv [-2x_0,-2y_0]\not\equiv [0,0] \bmod p.
$$
In either case, we can choose $u,v$ such that $p\nmid (u,v)$.
\end{case}

\begin{case}[Primes $p\equiv 3\bmod 4$ that divide $q$]
Assume that $p^\alpha\|(h,q)$ and that $p^{\beta}\|a$.
We know that $\beta$ is even since $a$ is admissible (and since $\nu_p$ is even).

Consider first the case where $\beta>\alpha$.
Then $p^\alpha\| a + h$. Since $a + h$ is also admissible, $\alpha$ is even as well.
Thus we can choose $x_0,y_0,u,v$ such that
\begin{align*}
p^{\beta/2} \| (x_0, y_0)\\
p^{\alpha/2}\| (u, v).
\end{align*}
Note that $\alpha/2 < \beta/2 \leq \nu_p/2$.

Next consider the case $\beta < \alpha$, so that $p^\beta \| a+h$ as well and $a \equiv a +h \bmod{p^{\beta + 1}}$.
We can choose $u,v$ such that 
$$
[u,v]\equiv[-2x_0,-2y_0]\bmod p^{\beta/2 + 1},
$$
which implies that
$$
a + h \equiv 
(x_0 + u)^2 + (y_0 + v)^2
\equiv x_0^2 + y_0^2 \equiv a \bmod p^{\beta + 1}.
$$
With this choice, we have $p^{\beta/2}\|(u,v)$ since $[-2x_0,-2y_0]\not\equiv [0,0]\bmod p^{\beta/2 + 1}$.

Finally consider the case $\alpha = \beta$.
In this case, we can choose $x_0,y_0,u,v$ such that
\begin{gather*}
p^{\alpha/2}\| (x_0,y_0)\\
p^{\alpha/2}\| (x_0 + u, y_0 + v).
\end{gather*}
This implies $p^{\alpha / 2} \mid (u,v)$.
We can further choose $u,v$ such that $p^{\alpha / 2} \| (u,v)$.
Otherwise, if $p^{\alpha / 2 + 1}\mid (u,v)$, then if we assume W.L.G that $p^{\alpha /2 + 1}\nmid y_0$ then we can replace $v$ by $-2y_0 - v$ which ensures $p^{\alpha/2 + 1}\nmid v$.

In each case, we found that if $p^\delta\|(u,v)$ then $p^\delta \mid (x_0,y_0)$ and $\delta \leq\nu_p/2$.
\end{case}

\begin{case}[The prime $2$]
Let $2^{\nu_2}\| q$, $2^{\alpha}\| a$, and $2^{\beta}\| a + h$.
Since we assumed that $a\not\equiv 0 \bmod{2^{\nu_2-1}}$, we know that $\alpha\leq \nu_2 -2$.
We can choose $x_0,y_0$ such that $2^{\lfloor \alpha/2\rfloor}\| (x_0,y_0)$ by choosing
$$
 [x_0,y_0]\equiv 
\begin{cases}
   [0,0]\bmod{2^{ \alpha/2}} & \alpha\text{ is even,} \\
    [2^{(\alpha - 1)/2},2^{(\alpha - 1)/2}]\bmod{2^{(\alpha - 1)/2 + 1}} & \alpha \text{ is odd.}
\end{cases}
$$

If $\beta < \alpha$, then we can choose $u,v$ such that 
$$
2^{\lfloor \beta/2\rfloor}\| (u,v),
$$
where $\lfloor\beta/2\rfloor < \lfloor\alpha/2\rfloor \leq \nu_2/2-1$.

Assume now that $\beta\geq \alpha$.
If $\alpha$ is even, then $\frac{x_0}{2^{\alpha/2}}$ and $\frac{y_0}{2^{\alpha/2}}$ must have opposite parity.
In this case, if $\beta =\alpha$ we can pick
$$
[u,v]\equiv[2^{\alpha/2 },2^{\alpha/2}]\bmod 2^{\alpha/2 + 1},
$$
and if $\beta>\alpha$ we can pick
$$
[u,v]\equiv[x_0,y_0]\bmod 2^{\alpha/2 + 1}.
$$

If $\alpha$ is odd then $\frac{x_0}{2^{(\alpha-1)/2}}$ and $\frac{y_0}{2^{(\alpha-1)/2}}$ must both be odd.
In this case, if $\beta>\alpha$ we can pick
$$
[u,v]\equiv[x_0,y_0]\bmod{2^{(\alpha-1)/2 + 1}},
$$
and if $\alpha=\beta$ we can pick
$$
[u,v]\equiv[2^{(\alpha-1)/2 +1},2^{(\alpha-1)/2 +1}]\bmod{2^{(\alpha-1)/2 + 2}}.
$$
In this case, we have $2^{(\alpha-1)/2 +1}\|(u,v)$ and $2^{(\alpha-1)/2}\|(x_0,y_0)$; this  is why the extra factor of $2$ is needed in \eqref{eq-3}.
The condition $\alpha\leq \nu_2-2$ ensures that we still have 
$$
1 + \left((\alpha-1)/2 +1\right) \leq \nu_2
$$
which is required since we want $2(u,v)\mid q$.
\end{case}

\setcounter{case}{0}

To summarize, we have now found $x_0,y_0,u,v\in\ZZ$ which simultaneously satisfy \eqref{eq-1},\eqref{eq-2}, and \eqref{eq-3}.
Write $T := \frac{q}{2(u,v)},$ so that by \eqref{eq-3}, $q\mid T^2$ and $q\mid 2(x_0,y_0)T$.
For $r,s \in \ZZ$, define
\begin{align*}
x &= x(r) = x_0 + Tr, \\
y &= y(s) = y_0 + Ts,
\end{align*}
so that for any $n$ with $n = x^2 + y^2$,
$$
n = x_0^2 + y_0^2 + 2T(rx_0 + sy_0) + T^2(r^2 + s^2) 
\equiv x_0^2 + y_0^2 \equiv a \bmod q.
$$
For suitable choices of $r,s$ we will have $n+h = (x+u)^2 + (y+v)^2$.
For this to hold, we need the following equality to hold:
\begin{align}\label{eq-5}
\begin{split}
h &= (x+u)^2 + (y+v)^2 - x^2 - y^2 \\
&= u^2 + v^2 + 2u\left(x_0 + r T\right) + 2b\left(y_0 + s T\right) \\
& = u^2 + v^2 + 2(u x_0 + v y_0) + q\left(\frac{u}{(u,v)}r + \frac{v}{(u,v)}s\right).
\end{split}
\end{align}
This equation holds mod $q$, so there is some choice of $r_0,s_0$ which solves \eqref{eq-5} in $\ZZ$. We therefore write:
\begin{align*}
\begin{split}
r = r(t) = r_0 + \frac{v}{(u,v)}t,\\
s = s(t) = s_0 - \frac{u}{(u,v)}t.
\end{split}
\end{align*}

Then $n = x^2 + y^2 = (x_0 + T r(t))^2 + (y_0 + Ts(t))^2$, which implies that
$$
n = x^2 + y^2 \equiv a\bmod q
$$ 
and 
$$
n+h = (x+u)^2 + (y+ v)^2.
$$

Our goal is to show that there are infinitely many $t$'s such that $n+k = n(t) + k$ is a sum of two squares.
Every such $t$ determines an integer $n = n(t)$ such that $n,n+h,n+k$ are all sums of two squares, and $n\equiv a\bmod q$.
Furthermore, for every $n_0$ there are only finitely many $t$'s that $n(t) = n_0$, since the number of representations of $n_0$ as a sum of two squares is finite (and different $t's$ give different representations of $n_0$ as a sum of two squares). Thus if there are infinitely many such $t$'s, there are infinitely many $n \equiv a \bmod q$ with $n, n+h,$ and $n+k$ all sums of two squares.

Expanding $n(t)$ gives
\begin{align*}
n(t) + k 
=& k + 
\left(x_0 + \frac{q}{2(u,v)}\left(r_0 +\frac{v}{(u,v)}t\right)\right)^2
+ \left( y_0 + \frac{q}{2(u,v)}\left(s_0 -\frac{u}{(u,v)}t\right)\right)^2 \\
 =& k + \left(x_0 + \frac{q}{2(u,v)} r_0\right)^2 
 + 
\left(y_0 + \frac{q}{2(u,v)}s_0\right)^2 \\
&+ \frac{q}{(u,v)}\left(
\left(x_0 + \frac{q}{2(u,v)} r_0\right)\frac{v}{(u,v)} 
-
\left(y_0 + \frac{q}{2(u,v)}s_0\right)\frac{u}{(u,v)} 
\right)t \\
& +\left(\frac{q}{2(u,v)}\right)^2 \frac{u^2 + v^2}{(u,v)^2} t^2.
\end{align*}

We call this polynomial $F(t) = At^2 + Bt + (C+k)$.
We want to show that 
\begin{equation}\label{eq-6}
\sum_{t\leq X}\mathbb{1}_{\mathbf E}(F(t)) \xrightarrow{X\rightarrow \infty}\infty.
\end{equation}

We first show that there are no local obstructions.
The leading coefficient of $F(t)$ is 
$$
A = \left(\frac{q}{2(u,v)}\right)^2 \frac{u^2 + v^2}{(u,v)^2}.
$$
For every prime power $p^\alpha \nmid q$ with $p\equiv 3\bmod 4$, $A \not\equiv 0 \mod q$, so $F(t)$ is not the zero polynomial mod $p^\alpha$. 

For every prime power $p^\alpha\mid q$ with $p\equiv 3\bmod 4$, since $n(t) \equiv a \mod q$, $n(t) \equiv a \mod p^\a$, and thus the equation $F(t) = z^2 + w^2 \bmod{p^\alpha}$ becomes
$$
z^2 + w^2 = k + a \bmod{p^\alpha}
$$
which is solvable since $k+a$ is admissible mod $q$.
Since $q$ is divisible by $p$ an even number of times, there are no local obstructions at any prime $p\equiv 3\bmod 4$.

We now consider local obstructions at powers of $2$.
Such obstructions have the form $F(t) \equiv 3\cdot 2^{\alpha - 2}\bmod {2^\alpha}$ for all $t\bmod {2^\alpha}$ for some $\alpha$.
However, we see that $F(0) \equiv a+ k \bmod 2^{\nu_2}$.
The fact that $a+ k$ is admissible mod $2^{\nu_2}$, and that $a+k\not\equiv 0 \bmod{2^{\nu_2 -1}}$ implies that there are no obstructions at any power of $2$.

We now consider the discriminant of $F$, defined by
$$
\text{disc}(F) = B^2 - 4A(C+k) = B^2 - 4AC - 4Ak.
$$
Since $n(t) = F(t) - k$ is given as a sum of two squares, we get that $B^2 - 4AC$ has the form $-\eta^2$ for some integer $\eta$, and is thus non-positive.
Furthermore, $A$ is a sum of two squares by its definition, so $A\geq 0$, and $k>0$ since $k\in \NN$.
It follows that $B^2 - 4A(C+k) \le 0$.

If $\text{disc}(F)$ has the form $-\eta^2$ for some $\eta\in\ZZ$, then by completing the square, it is easy to see that $F(t)$ is a sum of two squares for all $t$.
Otherwise, if $\text{disc}(F)\neq -\eta^2$, we get from \cite[Theorem 2]{MR476673-friedlander-iwaniec} that 
$$
\sum_{t\leq X}\mathbb{1}_{\mathbf E}(F(t)) \gg \frac{X}{\sqrt{\log X}}.
$$
In either case, there are infinitely many $t$'s such that $n(t)+k$ is a sum of two squares.

\end{proof}

\bibliographystyle{amsplain}
\bibliography{bibliography}

\end{document}